\newtheorem{thm}{Theorem}[section]
\newtheorem{cor}[thm]{Corollary}
\newtheorem{lem}[thm]{Lemma}
\newtheorem{definition}[thm]{Definition}
\newtheorem{proposition}[thm]{Proposition}
\theoremstyle{definition}
\title[]{An example of an infinite amenable group with the ISR property}
\author{Yongle Jiang}
\address{Y.J., School of Mathematical Sciences, Dalian University of Technology, Dalian, Liaoning, 116024, China}
\email{yonglejiang@dlut.edu.cn}
\author{Xiaoyan  Zhou*}
\address{X.Z., School of Date Science and Artificial Intelligence,
Dongbei University of Finance \& Economics, Dalian, Liaoning, 116025, China}
\email{xyzhou@dufe.edu.cn}
\thanks{$\ast$-corresponding author}
\date{\today}
\begin{document}

\begin{abstract}
Let $G$ be $S_{\mathbb{N}}$, the finitary permutation (i.e., permutations with finite support) group on the set of positive integers $\mathbb{N}$. We prove that $G$ has the invariant von Neumann subalgebras rigidity (ISR, for short) property as introduced in Amrutam-Jiang's work. More precisely, every $G$-invariant von Neumann subalgebra $P\subseteq L(G)$ is of the form $L(H)$ for some normal sugbroup $H\lhd G$ and in this case, $H=\{e\}, A_{\mathbb{N}}$ or $G$, where $A_{\mathbb{N}}$ denotes the finitary alternating group on $\mathbb{N}$, i.e., the subgroup of all even permutations in $S_{\mathbb{N}}$. This gives the first known example of an infinite amenable group with the ISR property.
\end{abstract}

\subjclass[2010]{Primary 46L10, Secondary 22D25 22D10 43A35}

\keywords{invariant von Neumann subalgebras, characters, finitary permutation groups, finitary alternating groups}

\maketitle

\section{Introduction}

Let $G$ be a countable discrete group and $L(G)$ be the group von Neumann algebra generated by $G$. Note that $G$ acts on $L(G)$ naturally by conjugation. We say a von Neumann subalgebra $P\subseteq L(G)$ is \emph{invariant} if it is invariant as a set under the $G$-conjugation action. For example, if $H$ is a normal subgroup in $G$, then $L(H)$ is an invariant von Neumann subalgebra in $L(G)$. 

It is a natural question to classify all invariant von Neumann subalgebras in $L(G)$. 
In \cite[Corollary 4.8]{ab}, Alekseev and Brugger proved that for  a lattice subgroup $G$ in a higher rank simple real Lie group with trivial center, every invariant subfactor is  either $\mathbb{C}$ or has finite Jones's index in $L(G)$ using character rigidity techniques introduced in \cites{cp,pet}. For other classes of groups complementary to lattices in higher rank groups, including all non-abelian free groups, Chifan and Das made further progress (see \cite[Theorem 3.15, Theorem 3.16 and Corollary 3.17]{cd} for precise statements) on describing invariant subfactors but by deformation/rigidity techniques for array/quasi-cocycles on groups as introduced and studied in \cites{cs11, csu11,csu13,ckp15}.

Concerning the above question, the simplest answer one can hope is that every invariant von Neumann subalgebra is of the form $L(H)$ for some normal subgroup $H\lhd G$. In \cite{aj}, Amrutam and the first named author introduced ``the invariant von Neumann subalgebras rigidity" (ISR for short) property for a group $G$ once this best situation occurs, i.e., every invariant von Neumann subalgebra of $L(G)$ arises as the group von Neumann algebra of a normal subgroup of $G$. It is not hard to see that to search for groups with the ISR property, we may assume that $G$ is infinite (see Proposition \ref{prop: finite ISR groups}). The first known class of groups  with this property is due to Kalantar and Panagopoulos \cite{kp}. They proved that any irreducible lattice inside a connected semisimple Lie group with trivial center, no
non-trivial compact factors, and such that all its simple factors have real rank at least two has the ISR property. In fact, this result is the motivation for introducing the ISR property in \cite{aj}. In \cite{aj}, Amrutam and the first named author found other classes of groups with the ISR property, including all torsion-free hyperbolic groups with trivial amenable radical, torsion-free non-amenable groups with zero first $L^2$-Betti number under a mild assumption and finite direct product of groups from these classes. Subsequently, Chifan, Das and Sun \cite{cds} extended some results in \cite{aj} to much wider classes, e.g., all acylindricially hyperbolic groups with trivial amenable radical. Moreover, they conjectured that all non-amenable groups with trivial amenable radical should have the ISR property. Quite recently, in the remarkable work \cite{aho}, Amrutam, Hartman and Oppelmayer made important progress on this conjecture by proving that for any non-amenable group $G$, $L(G_a)$ is the unique maximal invariant amenable von Neumann subalgebra in $L(G)$, where $G_a$ denotes the amenable radical of $G$. Therefore, for a non-amenable group $G$ with trivial $G_a$, the only amenable invariant von Neumann subalgebra in $L(G)$ is the trivial subalgebra $\mathbb{C}$.

However, despite all these results, there are still two fundamental questions unanswered. One is whether the ISR property could be shared by any infinite amenable groups. Note that techniques in \cites{kp,aj,cds} are mainly developed to deal with non-amenable groups with trivial amenable radical.  The other question is whether the ISR property is preserved under W$^*$-equivalence. 

In this paper, we answer the above mentioned  two questions. For the first question, we show that the answer is positive by proving the following theorem.

\begin{thm}\label{thm: main thm}
Let $S_{\mathbb{N}}$ be the finitary permutation group on the set of positive integers $\mathbb{N}$.
Let $P\subseteq L(S_{\mathbb{N}})$ be an invariant von Neumann subalgebra in $L(S_{\mathbb{N}})$. Then $P=\mathbb{C}$, $L(A_{\mathbb{N}})$ or $L(S_{\mathbb{N}})$, where $A_{\mathbb{N}}$ denotes the finitary alternating group on $\mathbb{N}$.
\end{thm}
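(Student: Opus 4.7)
The plan is to attach to any invariant $P \subseteq L(S_{\mathbb{N}})$ a canonical normal subgroup $H \lhd S_{\mathbb{N}}$ and then prove $P = L(H)$. Since the only normal subgroups of $S_{\mathbb{N}}$ are $\{e\}$, $A_{\mathbb{N}}$, and $S_{\mathbb{N}}$ itself, this yields exactly the three alternatives in the theorem.

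Write $G := S_{\mathbb{N}}$ and set $H := \{g \in G : u_g \in P\}$. Since $\mathrm{Ad}(u_k)$ preserves $P$ for every $k \in G$, the set $H$ is closed under conjugation and is therefore normal in $G$; by construction $L(H) \in \{\mathbb{C}, L(A_{\mathbb{N}}), L(G)\}$ and $L(H) \subseteq P$. If $H = G$ we are done. Otherwise, the content lies in proving the reverse inclusion $P \subseteq L(H)$, and since $\{u_g\}_{g \in G}$ is total in $L^2(L(G),\tau)$ this reduces to showing $E_P(u_g) = 0$ for every $g \notin H$, where $E_P : L(G) \to P$ is the $\tau$-preserving conditional expectation.

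To establish this vanishing, I would exploit the filtration $G = \bigcup_{n \geq 1} S_n$. For each $n$, the intersection $P_n := P \cap L(S_n)$ is an $S_n$-invariant unital $\ast$-subalgebra of the finite-dimensional semisimple algebra $L(S_n) \cong \bigoplus_{\lambda \vdash n} M_{d_\lambda}(\mathbb{C})$. Schur's lemma gives $L(S_n)^{S_n} = Z(L(S_n))$, and because $Z(P_n)$ is itself $S_n$-invariant one deduces $Z(P_n) \subseteq Z(L(S_n))$; consequently $P_n = \bigoplus_i F_i$ with each $F_i$ a factor embedded in a partial sum $\bigoplus_{\lambda \in I_i} M_{d_\lambda}(\mathbb{C})$ of isotypic blocks. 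The additional $G$-invariance, in particular invariance under conjugation by transpositions $(i, n{+}1)$ that do not normalise $L(S_n)$, relates $P_n$ to its conjugate copies inside $P_{n+1}$ and yields a rigid compatibility system across the tower $S_1 \subseteq S_2 \subseteq \cdots$. Combined with the classical branching rules for the chain $S_n \subseteq S_{n+1}$, I expect these constraints to force $P_n \in \{\mathbb{C}, L(A_n), L(S_n)\}$ for all sufficiently large $n$, so that the WOT closure of $\bigcup_n P_n$ gives $P \in \{\mathbb{C}, L(A_{\mathbb{N}}), L(G)\}$.

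The main obstacle is ruling out the ``exotic'' $S_n$-invariant subfactors $F \subsetneq M_{d_\lambda}(\mathbb{C})$, or more generally factors $F$ embedded non-diagonally across several isotypic blocks, which can in principle occur whenever the $d_\lambda$ share common divisors and suitable $S_n$-equivariant intertwiners are available. Such exotic choices at level $n$ should become incompatible with $S_{n+1}$-invariance at the next level, but verifying this requires delicate combinatorial bookkeeping of how invariant subfactors interact with the branching inclusion $L(S_n) \hookrightarrow L(S_{n+1})$. A secondary technical point is justifying the density claim $P = \overline{\bigcup_n P_n}^{\,\mathrm{WOT}}$, which is not automatic for a general invariant subalgebra and must be argued from the structure of $S_{\mathbb{N}}$, for example via a commuting-square property of the filtration or a F\o lner-averaging argument leveraging amenability of $G$.
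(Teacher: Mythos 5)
Your proposal is a plan rather than a proof, and the two places where you write ``I expect'' and ``must be argued'' are precisely where all of the difficulty lives. The reduction to showing $E_P(u_g)=0$ for $g\notin H$ is correct, and the density claim $P=\overline{\bigcup_n P_n}^{\,\mathrm{WOT}}$ is in fact repairable: conjugation by $S_{\mathbb{N}\setminus [n]}$ fixes $L(S_{[n]})$ pointwise and has $L(S_{[n]})$ as its full fixed-point algebra (this is the paper's relative commutant lemma), so F\o lner-averaging over $S_{\mathbb{N}\setminus[n]}$ gives a conditional expectation onto $L(S_{[n]})$ that preserves $P$, whence $E_{L(S_{[n]})}(P)\subseteq P_n$ and density follows. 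But the core step --- forcing $P_n\in\{\mathbb{C},L(A_n),L(S_n)\}$ eventually --- is left entirely unargued, and it is genuinely hard: at each finite level there are many $S_n$-invariant subalgebras that are not group algebras of normal subgroups (any sum of isotypic blocks, the center, and more), so the argument must extract rigidity from the interaction across the whole tower. Carrying this out via branching rules would essentially amount to re-deriving the Vershik--Kerov asymptotic analysis underlying Thoma's theorem. There is also a concrete error in the structural claim you do make: from the fact that $Z(P_n)$ is $S_n$-invariant \emph{as a set} you cannot conclude $Z(P_n)\subseteq L(S_n)^{S_n}=Z(L(S_n))$; set-invariance is weaker than being pointwise fixed. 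For instance $L(A_3)\subseteq L(S_3)$ is abelian and conjugation-invariant but is not contained in $Z(L(S_3))$. So even the decomposition of $P_n$ along central projections of $L(S_n)$ is unjustified.

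The paper avoids all of this by importing Thoma's classification of characters of $S_{\mathbb{N}}$ as a black box. It shows that $\phi(g)=\tau(E(g)g^{-1})$ is a trace, that $E(s)$ is supported in $L(S_{\mathrm{supp}(s)})$, and then by direct algebra (using $E\circ E=E$, self-adjointness, and conjugation-invariance) pins down $E((1\,2))\in\{0,(1\,2)\}$ and $E((1\,2\,3))\in\{0,(1\,2\,3)\}$ after ruling out the symmetrized possibilities $\frac12((1\,2\,3)\pm(1\,3\,2))$ by explicit manipulations in $P$. Thoma's formula then shows that any trace vanishing on a $3$-cycle is $\delta_e$, which kills $E$ on all nontrivial elements at once. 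If you want to pursue your finite-dimensional approximation route, you should expect to need the full strength of the branching analysis; as written, the proposal has a genuine gap at its central step.
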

Combining it with \cite[Example 3.5]{aj}, we show that the second question has a negative answer.
\begin{cor}\label{cor: main corollary}
The ISR property is not preserved under W$^*$-equivalence, i.e., there exist two countable discrete groups $G$ and $H$ such that $L(G)\cong L(H)$, but exactly one of these groups has the ISR property.
\end{cor}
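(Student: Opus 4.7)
The plan is to extract the corollary as a one-line consequence of Theorem~\ref{thm: main thm} together with the cited example, using Connes' uniqueness theorem as a bridge. First I would observe that $G := S_{\mathbb{N}}$ is an ICC amenable group: it is amenable as a directed union of the finite symmetric groups $S_n$, and it is ICC because any nontrivial $\sigma\in S_{\mathbb{N}}$ has infinitely many distinct conjugates (for instance, by translating its finite support into disjoint regions of $\mathbb{N}$). Consequently, $L(S_{\mathbb{N}})$ is isomorphic to the hyperfinite II$_1$ factor $R$.

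Next I would invoke \cite[Example 3.5]{aj}, which exhibits a group $H$ that does not have the ISR property. Since the example provided there is (by inspection) also an ICC amenable group, Connes' uniqueness theorem gives $L(H)\cong R\cong L(S_{\mathbb{N}})$, so $S_{\mathbb{N}}$ and $H$ are W$^*$-equivalent. Finally, Theorem~\ref{thm: main thm} and the fact that the only normal subgroups of $S_{\mathbb{N}}$ are $\{e\}$, $A_{\mathbb{N}}$, and $S_{\mathbb{N}}$ (a classical result) together show that $G=S_{\mathbb{N}}$ does satisfy ISR: the three invariant subalgebras produced by the theorem are precisely the group von Neumann algebras of those three normal subgroups.

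Setting the pair to be $(G,H)=(S_{\mathbb{N}},H)$ then completes the proof. There is no real obstacle at the level of the corollary; all of the genuine work is concentrated in Theorem~\ref{thm: main thm} and in the verification that the group from \cite[Example 3.5]{aj} is ICC and amenable so that Connes' theorem may be applied.
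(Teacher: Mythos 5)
Your proposal is correct and follows essentially the same route as the paper: identify $H=\mathbb{Z}\wr\mathbb{Z}$ from \cite[Example 3.5]{aj} as an i.c.c.\ amenable group failing ISR, note that $S_{\mathbb{N}}$ is also i.c.c.\ amenable, apply Connes's theorem to get $L(S_{\mathbb{N}})\cong L(H)\cong\mathcal{R}$, and conclude via Theorem \ref{thm: main thm}.
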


Let us briefly sketch the proof of Theorem \ref{thm: main thm}.

Let $G=S_{\mathbb{N}}$ and $P$ be an invariant von Neumann subalgebra in $L(G)$. Denote by $E: (L(G),\tau)\rightarrow P$ the $\tau$-preserving conditional expectation onto $P$, where $\tau$ denotes the canonical trace on $L(G)$. Note that $\tau(x)=\langle x\delta_e,\delta_e\rangle$ for all $x\in L(G)$ and we 
set $\langle a, b\rangle_{\tau}:=\tau(b^*a)$ for any $a,b\in L(G)$. As explained in \cite{aj}, to show that $G$ has the ISR property, we need to argue that $E(u_g)\in \mathbb{C}u_g$ for all $g\in G$. Hereafter we denote by $g$ the canonical unitary $u_g\in L(G)$ for simplicity. In other words, we need to argue that the trace (in the sense of Definition \ref{def: trace and characters}) $G\ni g\overset{\phi}{\mapsto}\tau(E(g)g^{-1})\in\mathbb{C}$ is the characteristic function supported on a normal subgroup of $G$.

First, we observe that for any $g\in G$, if we write $E(g)=\sum_{s\in G}c_ss$ for its Fourier expansion as an element in $L(G)$, then $\text{supp}(s)\subseteq \text{supp}(g)$ for all $c_s\neq 0$. Here, for any $g\in G$, we write $\text{supp}(g)=\{i\in\mathbb{N}:~gi\neq i\}$ for the support of $g$.
Then, it is not hard to see that for the transposition $(1~2)$, we have $E((1~2))=(1~2)$ or $E((1~2))=0$. In the first case, we show that $E(g)=g$ for all $g\in G$; equivalently, the trace $\phi$ defined above is the constant function one on $G$ and $P=L(G)$. So we may assume that $E((1~2))=0$.

The next step is to argue that $E((1~2~3))=0$ or $E((1~2~3))=(1~2~3)$. 
In the case $E((1~2~3))=(1~2~3)$, we argue that $P=L(A_{\mathbb{N}})$ since $A_{\mathbb{N}}$ is normally generated by all 3-cycles and has index two in $G$ (see Proposition \ref{prop: normal subgroups in S_N}). Finally, for the case $E((1~2~3))=0$, we observe that the classification of characters on $G$ by Thoma \cite{tho-classification}, or more precisely, Corollary \ref{cor: 3-cycle vanishing observation} actually implies $E(g)=0$ for all $e\neq g\in G$.
This new approach to study the ISR property via the classification of characters is the main innovation in this paper.

\subsection*{Structure of the paper} In Section \ref{section: preliminaries}, we record the classification of all normal subgroups of $S_{\mathbb{N}}$ and review basic facts on the notion of traces and characters on groups, then we prove a key observation on traces on $S_{\mathbb{N}}$ based on Thoma's classification of characters on $S_{\mathbb{N}}$, i.e., Corollary \ref{cor: 3-cycle vanishing observation}. In Section \ref{section: proofs}, we prove the main theorem and its corollary.

\subsection*{Notations and conventions}
\begin{itemize}
\item Denote by $\mathbb{N}$ the collection of all positive integers.
\item For any $A\subseteq \mathbb{N}$, we write $S_A=\{s\in S_{\mathbb{N}}: \text{supp}(s)\subseteq A\}$. Note that $S_A$ is a subgroup in $S_{\mathbb{N}}$.
\item Let $n\geq 1$. Set $[n]=\{1,2,\ldots, n\}$.
\item Let $n_1,\ldots, n_k$ be $k$-many distinct elements in $\mathbb{N}$. We write $s=(n_1~\cdots~n_k)\in S_{\mathbb{N}}$ to mean the $k$-cycle defined by $sn_i=n_{i+1~mod~k}$ and $si=i$ for all $i\not\in\{n_1,\ldots, n_k\}$.
    \item Let $k\geq 2$. For any two tuples $(n_1,n_2,\ldots, n_k), (m_1,m_2,\ldots, m_k)\in \mathbb{N}^k$, both of which have distinct entries, then we write $s=\left(\begin{smallmatrix}n_1&n_2&\cdots&n_k\\
    m_1&m_2&\cdots&m_k\end{smallmatrix} \right)$ to mean any element $s\in S_{\mathbb{N}}$ such that $sn_i=m_i$ for all  $1\leq i\leq k$. Since we only use the fact $s(n_1~n_2~\cdots~n_k)s^{-1}=(m_1~m_2~\cdots~m_k)$ in this paper, the ambiguity of $s$ would not cause problem.
\end{itemize}

\section{Preliminaries}\label{section: preliminaries}

The classification of all normal subgroups of $S_{\mathbb{N}}$ is well-known, see e.g., \cites{ono,su}. We are grateful to Prof. Mikael de la Salle for pointing out these references.
For the sake of completion,
we add a proof below. \begin{proposition}\label{prop: normal subgroups in S_N}
Let $N\lhd S_{\mathbb{N}}$ be a normal subgroup. Then either $N=\{e\}$, $N=A_{\mathbb{N}}$ or $N=S_{\mathbb{N}}$. Moreover, the subgroup index $[S_{\mathbb{N}}: A_{\mathbb{N}}]=2$.
\end{proposition}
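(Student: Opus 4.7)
The plan is to exhaust a nontrivial normal subgroup $N \lhd S_{\mathbb{N}}$ by intersecting with the finite symmetric groups $S_{[n]}$ and invoking the classical simplicity of $A_n$ for $n \geq 5$.

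First I would observe that for every $n \geq 1$, the subgroup $S_{[n]} \leq S_{\mathbb{N}}$ sits inside $S_{\mathbb{N}}$ as the stabilizer of $\mathbb{N} \setminus [n]$, and the intersection $N \cap S_{[n]}$ is a normal subgroup of $S_{[n]} \cong S_n$. Indeed, for any $h \in S_{[n]}$ and $x \in N \cap S_{[n]}$, the element $hxh^{-1}$ lies in $N$ by normality of $N$ in $S_{\mathbb{N}}$ and in $S_{[n]}$ since conjugation permutes supports. Now assume $N \neq \{e\}$ and pick some $e \neq g \in N$. Since $g$ has finite support, there is $m$ with $\text{supp}(g) \subseteq [m]$, hence $g \in N \cap S_{[n]}$ for every $n \geq m$.

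Next I would apply the classical fact that, for $n \geq 5$, the only normal subgroups of $S_n$ are $\{e\}$, $A_n$, and $S_n$ (a consequence of the simplicity of $A_n$). For every $n \geq \max(m, 5)$, the subgroup $N \cap S_{[n]}$ is nontrivial and normal in $S_{[n]}$, so it must contain $A_{[n]}$. Taking the ascending union over $n$ gives
\[
A_{\mathbb{N}} \;=\; \bigcup_{n \geq 5} A_{[n]} \;\subseteq\; N.
\]
If $N \subseteq A_{\mathbb{N}}$ then $N = A_{\mathbb{N}}$. Otherwise $N$ contains some odd permutation $\sigma$, and together with the inclusion $A_{\mathbb{N}} \subseteq N$ this yields $\sigma A_{\mathbb{N}} \cup A_{\mathbb{N}} \subseteq N$, which exhausts $S_{\mathbb{N}}$ once the index claim is established.

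For the index, I would check that the sign homomorphism $\text{sgn}: S_{\mathbb{N}} \to \{\pm 1\}$ is well-defined on finitary permutations (each such $s$ lies in some $S_{[n]}$, and the sign is independent of the choice of $n$), is surjective (the transposition $(1~2)$ has sign $-1$), and has kernel exactly $A_{\mathbb{N}}$. Hence $[S_{\mathbb{N}} : A_{\mathbb{N}}] = 2$, which both completes the trichotomy above and finishes the proposition.

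The only nonroutine ingredient is the simplicity of $A_n$ for $n \geq 5$, which I would cite as standard; the rest is a direct exhaustion argument, so I do not anticipate a real obstacle.
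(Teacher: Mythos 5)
Your proof is correct and takes essentially the same approach as the paper: exhaust $S_{\mathbb{N}}$ by the finite subgroups $S_{[n]}$ and invoke the fact that $A_{[n]}$ is the unique proper nontrivial normal subgroup of $S_{[n]}$ for $n\geq 5$. Your organization is slightly more streamlined (one step showing any nontrivial $N$ contains $A_{\mathbb{N}}$, plus the sign homomorphism for the index, versus the paper's three-way case analysis and coset-union argument), but the underlying ideas are identical.
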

\begin{proof}

To see the subgroup index is two, we just note that $S_{\mathbb{N}}=A_{\mathbb{N}}\sqcup (1~2)A_{\mathbb{N}}$. Indeed, recall that $[n]:=\{1,\ldots, n\}$, then we may take the union over $n\geq 5$ on both sides of the equality $S_{[n]}=A_{[n]}\sqcup (1~2)A_{[n]}$ for all $n\geq 5$.

Note that $S_{\mathbb{N}}=\cup_{n\geq 5}S_{[n]}$ and $A_{\mathbb{N}}=\cup_{n\geq 5}A_{[n]}$. Moreover, $A_{[n]}$ is a simple group and also  the unique proper non-trivial normal subgroup in $S_{[n]}$ for $n\geq 5$, see e.g., \cite[Theorem 3.11 and Exercise 3.21, Chapter 3]{rotman_gtm148}.

Since $N\cap S_{[n]}\lhd S_{[n]}$, we deduce that $N\cap S_{[n]}$ is either $\{e\}$, $A_{[n]}$ or $S_{[n]}$ for any $n\geq 5$.
If there exits some $n_0\geq 5$ such that $N\cap S_{[n_0]}=S_{[n_0]}$,
then $S_{[n_0]}\subseteq N$. Hence for all $m>n_0$, we have $S_{[n_0]}\subseteq N\cap S_{[m]}=S_{[m]}$ since $S_{[n_0]}\not\subseteq A_{[m]}$.
Therefore, $S_{[m]}\subseteq N$ and hence $N=N\cap S_{\mathbb{N}}=N\cap (\cup_{m\geq n_0}S_{[m]})=\cup_{m\geq n_0}(N\cap S_{[m]})=\cup_{m\geq n_0}S_{[m]}=S_{\mathbb{N}}$.

From now on, we may assume $N\cap S_{[n]}=A_{[n]}$ or $\{e\}$ for all $n\geq 5$.

Case 1: $N\cap S_{[n]}=A_{[n]}$ for all $n\geq 5$.

Then $A_{[n]}\subseteq N$ for all $n\geq 5$ and hence $A_{\mathbb{N}}=\cup_{n\geq 5}A_{[n]}\subseteq N\subseteq S_{\mathbb{N}}$. Clearly, $N\neq S_{\mathbb{N}}$, thus  the fact that $A_{\mathbb{N}}$ has index two in $S_{\mathbb{N}}$ implies $N=A_{\mathbb{N}}$.

Case 2: there exists some $n_0\geq 5$ such that $N\cap S_{[n_0]}=\{e\}$.

Take any $m>n_0$, then by our assumption, $N\cap S_{[m]}=\{e\}$ or $A_{[m]}$. Assume that $N\cap S_{[m]}=A_{[m]}$ for some $m>n_0$, then $\{e\}=N\cap S_{[n_0]}=(N\cap S_{[m]})\cap S_{[n_0]}=A_{[m]}\cap S_{[n_0]}=A_{[n_0]}$, a contradiction. Hence $N\cap S_{[m]}=\{e\}$ for all $m>n_0$. Thus, $N=\cup_{m\geq n_0}(N\cap S_{[m]})=\cup_{m\geq n_0}\{e\}=\{e\}$.
\end{proof}

Next, we recall some basic facts on the notion of traces on groups.
\begin{definition}\label{def: trace and characters}
A \textbf{trace} on a group $G$ is a function $\phi: G\rightarrow\mathbb{C}$ so that
\begin{item}
\item[(1)] $\phi$ is positive definite, i.e., 
\[\sum_{i=1}^n\sum_{j=1}^n\overline{\alpha_i}\alpha_j\phi(s_i^{-1}s_j)\geq 0\]
for all $n\in\mathbb{N}$ and any choice of elements $s_1,\ldots, s_n\in G$ and $\alpha_1,\ldots, \alpha_n\in\mathbb{C}$,
\item[(2)] $\phi$ is conjugation invariant, i.e., $\phi(sts^{-1})=\phi(t)$ for all elements $s, t\in G$ and 
\item[(3)] $\phi$ is normalized, i.e., $\phi(e)=1$.
\end{item}\\
A \textbf{character} on $G$ is an extreme point in the set of all traces on $G$.
\end{definition}
We remind the reader that some researchers prefer calling characters instead of  traces defined here (extremal or indecomposable characters instead of characters defined here respectively), see e.g., \cites{dm_ggd, dm_jfa,gk,pet}. Here, we have followed the terminology used in \cites{es,lv}.

Let $Tr(G)$ be the space of traces on $G$ equipped with the topology of pointwise convergence. Then $Tr(G)$ is a compact convex subset of $\ell^{\infty}(G)$. Let $Ch(G)$ be the set of characters on $G$. 
It is known that $Tr(G)$ is a metrizable Choquet simplex \cite{tho-simplex}. This means that the barycenter map 
\[Prob(Ch(G))\rightarrow Tr(G),~\mu\mapsto \phi_{\mu}:=\int_{Ch(G)}\chi d\mu(\chi).\]
is a continuous affine bijection, see e.g., \cite[Theorem 4.5, Chapter 4]{bo} and \cite{phe} for an overview of Choquet theory.

The classification of characters on various classes of groups have been an active topic, see \cite{bd_book} for an overview. 
In particular,
the following famous classical theorem due to Thoma is the main ingredient in our proof. See \cite[Chapter 4]{bo} for an exposition on this theorem and also \cite{gk} for an operator algebraic proof.

\begin{thm}[Thoma, \cite{tho-classification}]\label{thm: Thoma}
A character of the group $S_{\mathbb{N}}$ is of the form \[\chi(s)=\prod_{k=2}^{\infty}(\sum_{i=1}^{\infty}a_i^k+(-1)^{k-1}\sum_{j=1}^{\infty}b_j^k)^{m_k(s)}.\]
Here, $m_k(s)$ is the number of k-cycles in the permutation $s$ and the two sequences $(a_i)_{i=1}^{\infty}$, $(b_j)_{j=1}^{\infty}$ satisfy
\[a_1\geq a_2\geq \cdots\geq 0, ~b_1\geq b_2\geq \cdots\geq 0,~ \sum_{i=1}^{\infty}a_i+\sum_{j=1}^{\infty}b_j\leq 1.\]
\end{thm}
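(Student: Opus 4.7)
The plan is to prove Thoma's classification via the asymptotic representation theory of the finite symmetric groups, following Vershik and Kerov; the guiding principle is that every extremal character of $S_{\mathbb{N}}$ arises as a pointwise limit of normalized irreducible characters of $S_n$. First I would show that restricting any character $\chi$ to $S_n$ decomposes it as a convex combination of the normalized irreducible characters $\widehat{\chi}^{\lambda}(s) := \chi^{\lambda}(s)/\dim \lambda$ for $\lambda \vdash n$, giving a compatible system of probability measures on the Young branching graph. Extremality of $\chi$ forces this coherent system to be ergodic (tail-trivial), and a standard martingale argument then produces a sequence $\lambda_n \vdash n$ with $\chi(s) = \lim_{n \to \infty} \widehat{\chi}^{\lambda_n}(s)$ for every $s \in S_{\mathbb{N}}$.

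Next, for a partition $\lambda$ with row lengths $r_1 \ge r_2 \ge \cdots$ and column lengths $c_1 \ge c_2 \ge \cdots$, set $\widetilde a_i(\lambda) = r_i/|\lambda|$ and $\widetilde b_j(\lambda) = c_j/|\lambda|$. A diagonal extraction along the sequence $(\lambda_n)$ produces limits $(a_i)_{i \ge 1}$ and $(b_j)_{j \ge 1}$ satisfying the monotonicity and summability conditions in the statement. The heart of the proof is then the asymptotic computation
\[ \lim_{n \to \infty} \widehat{\chi}^{\lambda_n}\!\bigl((1\;2\;\cdots\;k)\bigr) \;=\; \sum_{i \ge 1} a_i^k + (-1)^{k-1} \sum_{j \ge 1} b_j^k, \]
which I would attack via the Murnaghan--Nakayama rule: the value of $\chi^{\lambda_n}$ on a $k$-cycle is a signed sum over border strips of size $k$ that can be removed from $\lambda_n$, weighted by $(-1)^{\mathrm{ht}-1}$. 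In the scaling limit, border strips contained in a single row contribute $a_i^k$, those contained in a single column contribute $(-1)^{k-1} b_j^k$, and strips straddling more than one row and more than one column become negligible relative to $\dim \lambda_n$. This combinatorial asymptotic, obtained via the hook-length formula or equivalently the Frobenius formula expressing $\chi^\lambda$ in terms of power-sum symmetric functions, is the principal obstacle of the proof.

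For a general $s \in S_{\mathbb{N}}$ with cycle type $\prod_k k^{m_k(s)}$, the full Thoma formula follows from multiplicativity of the normalized character across disjoint cycles, which in the Vershik--Kerov framework reflects the free generation of the algebra of symmetric functions by the power sums $p_k$. An alternative, operator-algebraic route due to Gohm--Kostler bypasses the combinatorial step entirely: one realizes $\chi$ in its tracial GNS representation, observes that the transpositions $\{(i\;i{+}1)\}_{i \ge 1}$ form an exchangeable sequence of symmetries that generate a $\mathrm{II}_1$ factor when $\chi$ is extremal, and extracts the Thoma parameters $(a_i), (b_j)$ as the spectrum of a limiting operator via a non-commutative de Finetti theorem. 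Either route ultimately isolates the same essential difficulty, namely identifying the correct parametrization of extremal characters by the two Frobenius sequences.
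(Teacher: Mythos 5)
The paper does not prove this statement at all: it is quoted as Thoma's classical theorem, with the proof delegated to \cite{tho-classification} and the expositions in \cite{bo} and \cite{gk}. So there is no ``paper's proof'' to compare against; what you have written is an independent outline, and it should be judged as such.

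Your outline correctly identifies the Vershik--Kerov asymptotic route (which is genuinely different from Thoma's original argument via positive-definite class functions), and every step you name is a real step of that proof. But as written it is a roadmap, not a proof: each of its three pillars is itself a substantial theorem that you assert rather than establish. (i) The ergodic method --- that extremality of $\chi$ makes the coherent system on the Young graph ergodic and that the martingale limit along a typical path recovers $\chi(s)$ for \emph{every} $s$ simultaneously --- is the Vershik--Kerov ergodic theorem and needs the tail-triviality and uniform-integrability details spelled out. (ii) Multiplicativity of an extremal character over disjoint cycles is equivalent to extremality and is usually proved via the factoriality of the GNS representation together with an asymptotic-commutativity argument; your justification by ``free generation of the symmetric functions by the power sums'' explains why multiplicativity \emph{suffices} to reduce to single cycles, but it is not a proof that extremal characters \emph{are} multiplicative. (iii) The central asymptotic $\widehat{\chi}^{\lambda_n}((1\,2\,\cdots\,k))\to\sum_i a_i^k+(-1)^{k-1}\sum_j b_j^k$ is the principal technical content of the whole theorem, and you explicitly defer it; in particular the claim that border strips meeting more than one row and more than one column are negligible requires the quantitative estimate that their total signed contribution is $o(\dim\lambda_n)$ after normalization, which is exactly where the hook-length/Frobenius-formula work lives. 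There is also a compactness point left implicit: a diagonal extraction gives convergence of $r_i(\lambda_n)/n$ and $c_j(\lambda_n)/n$ only along a subsequence, and one must check the limit character is independent of the subsequence (it is, because it must equal the given $\chi$, but that should be said). None of these gaps indicates a wrong idea --- the skeleton is the standard correct one --- but filling them in is essentially the entire content of Thoma's theorem, which is why the paper, reasonably, cites it rather than reproving it.
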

Recall that each permutation $s\in S_{\mathbb{N}}$ can be decomposed into disjoint cycles $s_1,s_2,\ldots, s_n\in S_{\mathbb{N}}$ for some $n\in\mathbb{N}$ such that $s=s_1s_2\cdots s_n$. A cycle $(n_1~n_2~\cdots~ n_k)$ has the length $k$ and is referred to as a $k$-cycle.

We observe the following fact holds.

\begin{cor}\label{cor: 3-cycle vanishing observation}
Let $\phi$ be a trace on $S_{\mathbb{N}}$. Assume that $\phi((1~2~3))=0$, then $\phi(s)=0$ for all $e\neq s\in S_{\mathbb{N}}$, i.e., $\phi$ is the regular character $\delta_e$.
\end{cor}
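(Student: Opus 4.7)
The proof strategy is to pass from the trace $\phi$ to its Choquet integral decomposition over $Ch(S_{\mathbb{N}})$ and then exploit the explicit Thoma formula. Since $Tr(S_{\mathbb{N}})$ is a metrizable Choquet simplex, there is a unique probability measure $\mu$ on $Ch(S_{\mathbb{N}})$ with
\[\phi(s)=\int_{Ch(S_{\mathbb{N}})}\chi(s)\,d\mu(\chi)\qquad\text{for all }s\in S_{\mathbb{N}}.\]
Each character $\chi$ is parametrized by Thoma parameters $(a_i),(b_j)$ as in Theorem \ref{thm: Thoma}, and I plan to key on the particular evaluation of this formula at the $3$-cycle.

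The first key step is to evaluate $\chi((1~2~3))$. Since $m_3((1~2~3))=1$ and $m_k((1~2~3))=0$ for $k\neq 3$, and since $(-1)^{3-1}=1$, the Thoma formula collapses to
\[\chi((1~2~3))=\sum_{i=1}^{\infty}a_i^{3}+\sum_{j=1}^{\infty}b_j^{3}\geq 0,\]
because all $a_i,b_j$ are non-negative. Thus the function $\chi\mapsto \chi((1~2~3))$ is non-negative on $Ch(S_{\mathbb{N}})$. The hypothesis $\phi((1~2~3))=0$ together with the integral representation above then forces the non-negative integrand to vanish $\mu$-almost everywhere. Consequently, for $\mu$-a.e.\ $\chi$, every Thoma parameter satisfies $a_i=b_j=0$.

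The second step is to translate this vanishing of Thoma parameters back into vanishing of $\chi$ on all non-identity elements. Indeed, if all $a_i=b_j=0$, then for any $s\neq e$ there exists some $k\geq 2$ with $m_k(s)\geq 1$, and the corresponding factor in the Thoma product is $0^{m_k(s)}=0$, so $\chi(s)=0$; on the other hand $\chi(e)=1$ as an empty product. Hence $\chi=\delta_e$ for $\mu$-a.e.\ $\chi$, which gives $\mu=\delta_{\delta_e}$ and therefore $\phi(s)=\int \chi(s)\,d\mu(\chi)=\delta_e(s)$ for every $s\in S_{\mathbb{N}}$, as desired.

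There is no real obstacle here beyond correctly reading the $3$-cycle case of the Thoma formula; the main conceptual point is the positivity $\chi((1~2~3))\geq 0$ (which crucially uses that $k=3$ is odd so the sign in front of $\sum b_j^k$ is $+1$), and this is exactly why the vanishing at $(1~2~3)$ is strong enough to kill every non-trivial Thoma parameter.
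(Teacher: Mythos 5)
Your proposal is correct and follows essentially the same route as the paper: decompose $\phi$ over $Ch(S_{\mathbb{N}})$ via the Choquet simplex structure, use the Thoma formula to get $\chi((1~2~3))=\sum_i a_i^{3}+\sum_j b_j^{3}\geq 0$, conclude the Thoma parameters vanish $\mu$-a.e., and then kill every factor in the product for $s\neq e$. The only cosmetic difference is that the paper first treats the case where $\phi$ is itself a character and then reduces the general case to it, whereas you run the integral argument directly; the content is identical.
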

\begin{proof}
Note that $m_k((1~2~3))=1$ if $k=3$ and 0 for all $2\leq k\neq 3$.
Then by Theorem \ref{thm: Thoma}, we deduce that
\begin{align}\label{eq: formula for character value on 3-cycles}
\chi((1~2~3))=\sum_{i=1}^{\infty}a_i^3+\sum_{j=1}^{\infty}b_j^3,~\forall~\chi\in Ch(S_{\mathbb{N}}).   
\end{align}
Note that since $a_i\geq 0$ and $b_j\geq 0$ for all $i,j\geq 1$, the above formula implies that 
\begin{align}\label{eq: character value on 3-cycle}
\chi((1~2~3))\geq 0,~\forall~\chi\in Ch(S_{\mathbb{N}}).
\end{align}
First, we assume that $\phi$ is a character, i.e., $\phi\in Ch(S_{\mathbb{N}})$. Since  $\phi((1~2~3))=0$, we deduce that $a_i=b_j=0$ for all $i, j\geq 1$ from \eqref{eq: formula for character value on 3-cycles} and \eqref{eq: character value on 3-cycle}. Given any $e\neq s$, there exists some $k\geq 2$ such that $m_k(s)>0$, hence the $k$-th term in the product expression of $\phi(s)$ is 0, and therefore $\phi(s)=0$.

Now, let $\phi$ be a general trace.  Since $Tr(G)$ is a Choquet simplex, we may write $\phi=\int_{Ch(G)}\chi d\mu(\chi)$ for some $\mu\in Prob(Ch(G))$. Then, from $0=\phi((1~2~3))=\int_{Ch(G)}\chi((1~2~3))d\mu(\chi)$ and \eqref{eq: character value on 3-cycle}, we deduce that for $\mu$-a.e. $\chi\in Ch(G)$, we have $\chi((1~2~3))=0$. Thus $\chi(s)=0$ for all $e\neq s\in G$ and $\mu$-a.e. $\chi\in Ch(G)$ by what we have shown above. Thus, $\phi(s)=0$ for all $e\neq s\in G$.
\end{proof}

The following proposition explains why we focus on infinite groups when discussing the ISR property. In fact, it also follows directly from the proof of \cite[Proposition 3.1]{aj}, and we decide to include the proof here.

\begin{proposition}\label{prop: finite ISR groups}
Let $G$ be a finite group. Then $G$ has the ISR property if and only if $G$ is trivial or $G=C_2$, the cyclic group of order two.
\end{proposition}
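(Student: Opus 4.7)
The plan is to handle both directions separately.

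For the backward direction, when $G=\{e\}$ the only VN subalgebra of $L(G)=\mathbb{C}$ is itself, and this coincides with $L(\{e\})$, so ISR holds vacuously. When $G=C_2$, one has $L(G)\cong \mathbb{C}^2$, whose only unital $\ast$-subalgebras are $\mathbb{C}\cdot 1=L(\{e\})$ and $L(G)$ itself; both subgroups are normal, so ISR again holds.

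For the forward direction, I plan to assume $|G|\geq 3$ and exhibit a single explicit $G$-invariant VN subalgebra of $L(G)$ that fails to be of the form $L(H)$ for any normal subgroup $H\lhd G$. The candidate is $P:=\mathbb{C}\cdot 1+\mathbb{C}\cdot z$, where $z:=\frac{1}{|G|}\sum_{g\in G}u_g$ is the Haar idempotent onto the trivial representation. A direct check shows $u_h z=zu_h=z$ for every $h\in G$, so $z$ is a non-trivial self-adjoint central projection and $P$ is a $2$-dimensional unital abelian $\ast$-subalgebra, clearly invariant under $G$-conjugation (since $z$ is central).

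It then remains to show $P\neq L(H)$ for any $H\lhd G$. If such an $H$ existed, the dimension equation $|H|=\dim L(H)=\dim P=2$ would force $H=\{e,h_0\}$ for some order-two element $h_0$, so $L(H)=\mathbb{C}\cdot 1+\mathbb{C}\cdot u_{h_0}$. Writing $z=\alpha\cdot 1+\beta u_{h_0}$ and comparing Fourier coefficients at any element $g\in G\setminus\{e,h_0\}$ (which exists because $|G|\geq 3$) would yield $1/|G|=0$, a contradiction. No substantive obstacle arises; the whole argument reduces to this one-line Fourier comparison, and the same $P$ works uniformly for every finite $G$ of order at least three.
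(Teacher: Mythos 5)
Your proof is correct, and the forward direction takes a genuinely different (and more uniform) route than the paper. The paper first applies the ISR hypothesis to the center $\mathcal{Z}(L(G))$: since $\sum_{g\in G}u_g$ lies in the center, the normal subgroup realizing it must be all of $G$, forcing $L(G)$ abelian and hence $G$ abelian; it then identifies $L(G)\cong\ell^{\infty}([n])$, takes the $(n-1)$-dimensional subalgebra $\{x:x_1=x_2\}$, and invokes Lagrange's theorem to get $(n-1)\mid n$, i.e.\ $n\le 2$. You instead exhibit a single two-dimensional invariant subalgebra $\mathbb{C}1+\mathbb{C}z$, with $z$ the averaging projection, that works for every $G$ with $|G|\ge 3$: centrality of $z$ gives invariance for free (no reduction to the abelian case needed), and the Fourier-coefficient comparison at any $g\notin\{e,h_0\}$ rules out $P=L(H)$ outright. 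Your argument is shorter and avoids both the two-step reduction and Lagrange; the paper's version has the mild side benefit of showing explicitly that the ISR property forces $L(G)$ to be abelian for finite $G$, but for the stated proposition your construction is entirely adequate. All the small verifications you defer ($z^2=z=z^*$, $z$ central, $\dim P=2$ since $z\notin\mathbb{C}1$ when $|G|\ge 2$) do go through.
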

\begin{proof}
To see the ``if" direction holds, observe that for $G=C_2$, $L(G)\cong \mathbb{C}\oplus \mathbb{C}$, which contains only two von Neumann subalgebras, i.e., the diagonal subalgebra $P=\{(c,c): \in\mathbb{C}\}\cong \mathbb{C}=L(\{e\})$ and $L(G)$ itself.

Next, we prove the ``only if" direction holds. Assume that the finite group $G$ has the ISR property, then the center $P:=\mathcal{Z}(L(G))=L(H)$ for some subgroup $H\lhd G$ since $P$ is clearly $G$-invariant. Notice that $p=\sum_{g\in G}g\in P$, we deduce that $G=H$ and thus, $L(G)=\mathcal{Z}(LG)$, which is abelian. Hence $G$ is abelian.

Since $G$ is abelian, every von Neumann subalgebra in $L(G)$ is invariant. Moreover, $L(G)\cong  \ell^{\infty}([n])$, where $n=\sharp G$. 
Assume that $n\geq 2$.
Notice that $P:=\{(x_i)\in \ell^{\infty}([n]): x_1=x_2\}$ is a von Neumann subalgebra and hence $P=L(H)$ for some $H\lhd G$. Notice that $\sharp H=n-1$, then from the Lagrange theorem, we get that $(n-1)\mid n$. Thus, $n=2$. Clearly, this finishes the proof.
\end{proof}

\section{Proofs}\label{section: proofs}

We will frequently use the simple fact that $st=ts$ for any $s,t\in S_{\mathbb{N}}$ with disjoint supports.

\begin{lem}\label{lem: lemma to control relative commutant}
Let $A\subseteq \mathbb{N}$ be an infinite subset. Then $L(S_A)'\cap L(S_{\mathbb{N}})=L(S_{\mathbb{N}\setminus A})$.
\end{lem}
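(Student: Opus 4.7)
The plan is to prove the two inclusions $L(S_{\mathbb{N}\setminus A}) \subseteq L(S_A)' \cap L(S_{\mathbb{N}})$ and $L(S_A)' \cap L(S_{\mathbb{N}}) \subseteq L(S_{\mathbb{N}\setminus A})$ separately. The forward inclusion is immediate from the convention noted at the start of Section 3: every $t \in S_{\mathbb{N}\setminus A}$ has support disjoint from that of any $s \in S_A$, so $ts = st$; by taking weak$^*$-limits this gives $L(S_{\mathbb{N}\setminus A}) \subseteq L(S_A)'$, and obviously $L(S_{\mathbb{N}\setminus A}) \subseteq L(S_{\mathbb{N}})$.

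For the reverse inclusion, I would take $x \in L(S_A)' \cap L(S_{\mathbb{N}})$ and consider its Fourier expansion $x = \sum_{g \in S_{\mathbb{N}}} c_g\, g$ (convergence in $\|\cdot\|_2$), where $c_g = \tau(x g^{-1})$. Since $sxs^{-1} = x$ for every $s \in S_A$ and the conjugation action permutes the canonical unitaries $\{g\}_{g \in S_{\mathbb{N}}}$, a comparison of Fourier coefficients yields $c_{sgs^{-1}} = c_g$ for all $s \in S_A$ and $g \in S_{\mathbb{N}}$. In other words, $g \mapsto c_g$ is constant on every $S_A$-conjugacy orbit in $S_{\mathbb{N}}$. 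Combined with the Parseval-type constraint $\sum_g |c_g|^2 = \|x\|_2^2 < \infty$, this forces $c_g = 0$ whenever the $S_A$-orbit of $g$ under conjugation is infinite.

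The remaining step—and the only piece that requires a short combinatorial argument—is to show that if $g \in S_{\mathbb{N}}$ satisfies $\mathrm{supp}(g) \cap A \neq \emptyset$, then its $S_A$-conjugacy orbit is infinite. Fix $i \in \mathrm{supp}(g) \cap A$. Because $A$ is infinite and $\mathrm{supp}(g)$ is finite, the set $A \setminus \mathrm{supp}(g)$ is infinite; for each $j$ in it, the transposition $s_j := (i\;j)$ lies in $S_A$ and
\[
\mathrm{supp}(s_j g s_j^{-1}) = s_j(\mathrm{supp}(g)) = (\mathrm{supp}(g) \setminus \{i\}) \cup \{j\}.
\]
As $j$ varies, these supports are pairwise distinct, so the elements $s_j g s_j^{-1}$ are pairwise distinct, witnessing an infinite $S_A$-orbit. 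Consequently, $c_g \neq 0$ implies $\mathrm{supp}(g) \subseteq \mathbb{N}\setminus A$, i.e.\ $g \in S_{\mathbb{N}\setminus A}$, and therefore $x = \sum_{g \in S_{\mathbb{N}\setminus A}} c_g\, g \in L(S_{\mathbb{N}\setminus A})$.

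I do not anticipate a serious obstacle: the argument is a standard Fourier/commutant computation, with the only content being the orbit-size observation in the last paragraph. The one point to be careful about is the passage from the operator identity $sxs^{-1}=x$ to equality of Fourier coefficients, which is justified by uniqueness of the $\ell^2$-expansion in $L^2(L(S_{\mathbb{N}}),\tau)$ with respect to the orthonormal basis $\{g\}_{g \in S_{\mathbb{N}}}$.
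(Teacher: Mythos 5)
Your proof is correct and follows essentially the same route as the paper: the easy inclusion via disjoint supports, and the reverse inclusion by showing every $g$ with $\mathrm{supp}(g)\cap A\neq\emptyset$ has an infinite $S_A$-conjugacy orbit, using the same transpositions $(i\;j)$ with $j\in A$ outside $\mathrm{supp}(g)$ and distinguishing the conjugates by their supports. The only difference is that you spell out the standard Fourier-coefficient/square-summability argument that the paper leaves implicit when it reduces the claim to the infinitude of the conjugacy orbits.
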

\begin{proof}
``$\supseteq$" holds trivially. To see ``$\subseteq$" holds, it suffices to check that for any $s\in S_{\mathbb{N}}\setminus S_{\mathbb{N}\setminus A}$, we have $\sharp\{tst^{-1}: t\in S_A\}=\infty$.

Take any $s\in S_{\mathbb{N}}\setminus S_{\mathbb{N}\setminus A}$, then $\text{supp}(s)\cap A\neq\emptyset$. Fix any $i\in \text{supp}(s)\cap A$. 
Since $\sharp A=\infty$ and $\sharp\text{supp}(s)<\infty$, we may find an infinite strictly increasing sequence $\{j_k\}_{k\geq 1}$ in $A$ such that $j_1>\text{max}\{j: j\in \text{supp}(s)\}$. Then define the transpositions $t_k=(i~j_k)\in S_A$ for all $k\geq 1$. We are left to check that $t_kst_k^{-1}\neq t_{\ell}st_{\ell}^{-1}$ if $k\neq \ell$. Indeed, observe that 
$\text{supp}(t_kst_k^{-1})=t_k(\text{supp}(s))=t_k((\text{supp}(s)\setminus\{i\})\sqcup \{i\})=(\text{supp}(s)\setminus \{i\})\sqcup \{j_k\}$. It is clear that $\text{supp}(t_kst_k^{-1})\neq \text{supp}(t_{\ell}st_{\ell}^{-1})$ if $k\neq \ell$ by our choice of $\{j_k\}_{k\geq 1}$. Hence
$\sharp\{t_kst_k^{-1}: k\geq 1\}=\infty$.
\end{proof}

Let $G$ be any countable discrete group. Let $P\subseteq L(G)$ be a $G$-invariant von Neumann subalgebra. Denote by $E: (L(G),\tau)\rightarrow P$ the canonical $\tau$-preserving conditional expectation. Then set $\phi(g)=\tau(E(g)g^{-1})=\tau(E(g)E(g)^{-1})=||E(g)||^2_2$. It is well-known that $\phi$ is a trace on $G$, which has been used in \cite{ab}. For  convenience of the reader, we repeat the proof below.

\begin{proposition}\label{prop: phi is a trace}
Under the above notations, then $\phi: G\rightarrow \mathbb{C}$ is a trace on $G$.
\end{proposition}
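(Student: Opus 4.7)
The plan is to verify the three axioms of a trace from Definition~\ref{def: trace and characters} in turn: normalization, conjugation invariance, and positive definiteness. Normalization is immediate since $E$ is unital: $\phi(e)=\tau(E(1)\cdot 1)=\tau(1)=1$.

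For conjugation invariance, I would first record that the $G$-invariance of $P$ forces $E$ to intertwine the conjugation action on $L(G)$. Indeed, for each $g\in G$ the automorphism $\mathrm{Ad}(g)$ preserves $\tau$ and restricts to a $*$-automorphism of $P$, so $\mathrm{Ad}(g)\circ E$ and $E\circ\mathrm{Ad}(g)$ are both $\tau$-preserving conditional expectations of $L(G)$ onto $P$; by uniqueness they coincide, i.e.\ $E(ghg^{-1})=gE(h)g^{-1}$. Plugging this in and using traciality of $\tau$ then yields
\[\phi(ghg^{-1})=\tau\big(gE(h)g^{-1}\cdot gh^{-1}g^{-1}\big)=\tau(E(h)h^{-1})=\phi(h).\]

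The substantive step is positive definiteness, which I plan to extract from the complete positivity of $E$. Given $s_1,\ldots,s_n\in G$ and $\alpha_1,\ldots,\alpha_n\in\mathbb{C}$, view $S=(s_1,\ldots,s_n)$ as a row over $L(G)$, so that $S^{*}S=(s_i^{-1}s_j)_{i,j}$ is positive in $M_n(L(G))$. Applying $E$ entrywise (a conditional expectation is CP) gives that $A:=(E(s_i^{-1}s_j))_{i,j}$ is positive in $M_n(P)$, hence also in $M_n(L(G))$. The key move is to test $A$ against the column vector $w=(\alpha_j s_j^{-1})_{j=1}^n\in L(G)^n$: then $w^{*}Aw\in L(G)$ is positive, and a direct expansion together with trace cyclicity gives
\[\tau(w^{*}Aw)=\sum_{i,j}\overline{\alpha_i}\alpha_j\,\tau\big(s_iE(s_i^{-1}s_j)s_j^{-1}\big)=\sum_{i,j}\overline{\alpha_i}\alpha_j\,\tau\big(E(s_i^{-1}s_j)(s_i^{-1}s_j)^{-1}\big)=\sum_{i,j}\overline{\alpha_i}\alpha_j\,\phi(s_i^{-1}s_j).\]
Positivity of $\tau$ on positive elements then closes the argument.

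The only real obstacle is spotting the right vector $w$: one has to engineer $w_j$ to end with $s_j^{-1}$ so that the $s_i$ coming from $w^{*}$ can be cyclically rotated under $\tau$ to line up as $(s_i^{-1}s_j)^{-1}$ next to $E(s_i^{-1}s_j)$, thereby reassembling $\phi(s_i^{-1}s_j)$ on the nose. Once this alignment is noticed, the complete positivity of $E$ does all the work, and no recourse to the Jones basic construction or a dilation argument is needed.
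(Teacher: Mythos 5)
Your proof is correct and follows essentially the same route as the paper: normalization is immediate, equivariance of $E$ gives conjugation invariance, and positive definiteness comes from applying the completely positive map $E$ entrywise to the positive matrix $(s_i^{-1}s_j)_{i,j}$. The only cosmetic difference is that you extract the final positivity by computing $\tau(w^{*}Aw)$ with the algebra-valued vector $w=(\alpha_j s_j^{-1})_j$, whereas the paper pairs the same matrix against the vector $(\alpha_j\delta_{s_j^{-1}})_j\in\ell^2(G)^{\oplus n}$; these are the same argument seen through the GNS representation.
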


\begin{proof}
It is clear that $\phi(e)=\tau(e)=1$.

Next, we show that $\phi$ is conjugation invariant.
Since $P$ is $G$-invariant, we deduce that $gE(x)g^{-1}=E(gxg^{-1})$ for all $x\in L(G)$ and $g\in G$. Hence, $\phi(sts^{-1})=\tau(E(sts^{-1})st^{-1}s^{-1})=\tau(sE(t)s^{-1}st^{-1}s^{-1})=\tau(E(t)t^{-1})=\phi(t)$.

Finally, we show that $\phi$ is positive definite. The proof has appeared in the proof of \cite[Theorem 12.2.15]{bo_book} or  \cite[Lemma 1]{choda}. Note that $E$ is a unital completely positive map by \cite[Theorem 1.5.10]{bo_book}. Thus, for any $s_1,\ldots, s_n\in G$ and $\alpha_1,\ldots, \alpha_n\in\mathbb{C}$, we have
\begin{align*}
\sum_{i=1}^n\sum_{j=1}^n\overline{\alpha_i}\alpha_j\phi(s_i^{-1}s_j)
&=\sum_{i=1}^n\sum_{j=1}^n\overline{\alpha_i}\alpha_j\tau(E(s_i^{-1}s_j)s_j^{-1}s_i)\\
&=\sum_{i=1}^n\sum_{j=1}^n\overline{\alpha_i}\alpha_j\tau(s_iE(s_i^{-1}s_j)s_j^{-1})\\
&=\sum_{i=1}^n\sum_{j=1}^n\overline{\alpha_i}\alpha_j\langle E(s_i^{-1}s_j)s_j^{-1}\delta_e, s_i^{-1}\delta_e\rangle\\
&=\sum_{i=1}^n\sum_{j=1}^n\langle E(s_i^{-1}s_j)\alpha_j \delta_{s_j^{-1}}, \alpha_i\delta_{s_i^{-1}}\rangle\\
&=\langle [E(s_i^{-1}s_j)]_{1\leq i,j\leq n}(\alpha_j\delta_{s_j^{-1}})_{1\leq j\leq n}, (\alpha_i\delta_{s_i^{-1}})_{1\leq i\leq n}\rangle\geq 0,
\end{align*}
where $(\alpha_j \delta_{s_j^{-1}})_{1\leq j\leq n}\in \ell^2(G)^{\oplus n}$ and $[E(s_i^{-1}s_j)]_{1\leq i,j\leq n}\in M_n(L(G))^{+}$ since $E$ is a unital completely positive map and $[(s_i^{-1}s_j)]_{1\leq i, j\leq n}\in M_n(L(G))^+$.
\end{proof}

From now on, we write $G$ for $S_{\mathbb{N}}$. 
The following proposition would be needed for the proof of Theorem \ref{thm: main thm}.

\begin{proposition}\label{prop: rule out symmetry}
Let $P\subseteq L(G)$ be a $G$-invariant von Neumann subalgebra. Let $E: (L(G),\tau)\rightarrow P$ be the canonical $\tau$-preserving conditional expectation. Suppose that $E((1~2~3))=\mu(1~2~3)+\theta(1~3~2)$ for some real numbers $\mu,\theta\in\mathbb{R}$, then $(\mu,\theta)\neq (\frac{1}{2},\pm \frac{1}{2})$.
\end{proposition}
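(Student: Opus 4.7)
My plan is to argue by contradiction. Suppose $(\mu,\theta)=(\frac{1}{2},\frac{\epsilon}{2})$ for some $\epsilon\in\{+1,-1\}$, and write $a:=E((1~2~3))=\frac{1}{2}((1~2~3)+\epsilon(1~3~2))\in P$. The central tool is the bimodule identity $E(ax)=a\,E(x)$ valid for $a\in P$. I apply this with $x=(1~2~4)$ (a $3$-cycle sharing exactly one index with $(1~2~3)$), compute both sides explicitly, and derive an expression for $E((1~3)(2~4))$ whose $L^2$-norm equals $\|(1~3)(2~4)\|_2$. Since $E$ is an orthogonal $L^2$-projection, this would force $(1~3)(2~4)\in P$ with $E$ fixing it, in manifest contradiction with the computed formula.

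The first observation, from $G$-invariance of $P$ and conjugation equivariance of $E$, is that every $3$-cycle $d$ satisfies $E(d)=\frac{1}{2}(d+\epsilon d^{-1})$; in particular
\[E((1~2~4))=\tfrac{1}{2}((1~2~4)+\epsilon(1~4~2)),\qquad E((2~4~3))=\tfrac{1}{2}((2~4~3)+\epsilon(2~3~4)).\]
Four routine products of $3$-cycles in $S_{\mathbb{N}}$,
\[(1~2~3)(1~2~4)=(1~3)(2~4),\quad(1~3~2)(1~2~4)=(2~4~3),\]
\[(1~2~3)(1~4~2)=(1~4~3),\quad(1~3~2)(1~4~2)=(1~4)(2~3),\]
then yield on the one hand $a(1~2~4)=\frac{1}{2}[(1~3)(2~4)+\epsilon(2~4~3)]$, so
\[E(a(1~2~4))=\tfrac{1}{2}E((1~3)(2~4))+\tfrac{\epsilon}{4}(2~4~3)+\tfrac{1}{4}(2~3~4),\]
and on the other
\[aE((1~2~4))=\tfrac{1}{4}\bigl[(1~3)(2~4)+\epsilon(1~4~3)+\epsilon(2~4~3)+(1~4)(2~3)\bigr].\]
Equating these and solving gives
\[E((1~3)(2~4))=\tfrac{1}{2}\bigl[(1~3)(2~4)+\epsilon(1~4~3)+(1~4)(2~3)-(2~3~4)\bigr].\]

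The contradiction now comes from a norm computation. The four permutations on the right-hand side are pairwise distinct, so the element above has $L^2$-norm squared $4\cdot(1/2)^2=1=\|(1~3)(2~4)\|_2^2$. Since $E$ is the orthogonal $L^2$-projection onto $L^2(P)$, the equality $\|E(x)\|_2=\|x\|_2$ forces $E(x)=x\in P$; but the displayed formula for $E((1~3)(2~4))$ is manifestly not equal to $(1~3)(2~4)$, giving the desired contradiction. The main obstacle is combinatorial rather than conceptual --- namely, selecting an auxiliary $3$-cycle whose product structure with $(1~2~3)^{\pm 1}$ is ``balanced'' enough that the forced $E((1~3)(2~4))$ ends up with too many distinct nonzero Fourier coefficients. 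The $L^2$-contraction property of $E$, together with the bimodule identity, then does the rest with no input from Thoma's classification.
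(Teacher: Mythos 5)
Your proof is correct. I checked the four cycle products against the paper's own multiplication convention ($(1~2~3)(1~2~4)=(1~3)(2~4)$, $(1~3~2)(1~2~4)=(2~4~3)$, $(1~2~3)(1~4~2)=(1~4~3)$, $(1~3~2)(1~4~2)=(1~4)(2~3)$ all hold), and the resulting identity
\[E((1~3)(2~4))=\tfrac{1}{2}\bigl[(1~3)(2~4)+\epsilon(1~4~3)+(1~4)(2~3)-(2~3~4)\bigr]\]
follows correctly from the bimodule property $E(ax)=aE(x)$ together with the conjugation-equivariance of $E$. The four group elements appearing are indeed pairwise distinct, so the Pythagorean identity $\|x\|_2^2=\|E(x)\|_2^2+\|x-E(x)\|_2^2$ forces $E((1~3)(2~4))=(1~3)(2~4)$, contradicting the displayed formula. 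Your route differs from the paper's in a worthwhile way: the paper also starts from the product $[(1~2~3)+\epsilon(1~3~2)][(1~2~4)+\epsilon(1~4~2)]$, but it only uses that this product lies in $P$, and then runs two separate, fairly long chains of conjugations, additions and subtractions (one for $\epsilon=+1$, one for $\epsilon=-1$) to manufacture explicit elements of $P$ and reach a contradiction by different means in each case. By instead invoking the $P$-bimodule identity to pin down $E((1~3)(2~4))$ exactly, and then using the $L^2$-contractivity of the conditional expectation, you get a single uniform argument covering both signs with far less bookkeeping. The trade-off is negligible --- you use two standard facts about $E$ (bimodularity and orthogonal projection on $L^2$) where the paper only uses that $P$ is an algebra closed under conjugation, but both are freely available in this setting.
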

\begin{proof}
First, we prove that $(\mu,\theta)\neq (\frac{1}{2}, \frac{1}{2})$.

Assume this does not hold, then $E((1~2~3))=\frac{1}{2}((1~2~3)+(1~3~2))\in P$ and $E((1~2~4))=(3~4)E(1~2~3)(3~4)=\frac{1}{2}((1~2~4)+(1~4~2))\in P$. In general, we have $E(i~j~k)=\frac{1}{2}((i~j~k)+(i~k~j))\in P$ for any three distinct natural numbers $i, j, k$.
Hence,
\begin{align}\label{eq: 1/2-case, product formula}
\begin{split}
P&\ni [(1~2~3)+(1~3~2)][(1~2~4)+(1~4~2)]
\\
&=(1~3)(2~4)+(1~4~3)+(2~4~3)+(1~4)(2~3).
\end{split}
\end{align}
Thus, taking the conjugation of the RHS in \eqref{eq: 1/2-case, product formula} by the transposition $(3~4)$ and noticing that $P$ is $G$-invariant, we get that
\begin{align}\label{eq: 1/2-case, ad(34)(product formula)}
    P\ni (1~4)(2~3)+(1~3~4)+(2~3~4)+(1~3)(2~4).
\end{align}
Adding the RHS of \eqref{eq: 1/2-case, product formula} to  \eqref{eq: 1/2-case, ad(34)(product formula)} and noticing that $(1~4~3)+(1~3~4), (2~3~4)+(2~4~3)\in P$, we get that
\begin{align}\label{eq: 1/2-case, sum of product of 2-cycles in P}
    (1~3)(2~4)+(1~4)(2~3)\in P.
\end{align}
Taking the conjugacy of the LHS of \eqref{eq: 1/2-case, sum of product of 2-cycles in P} by the transposition $(2~3)$, we get that
\begin{align}\label{eq: 1/2-case, ad(23) on sum of product of 2-cycles}
    (1~2)(3~4)+(1~4)(2~3)\in P.
\end{align}
Thus, by subtracting \eqref{eq: 1/2-case, ad(23) on sum of product of 2-cycles} from \eqref{eq: 1/2-case, sum of product of 2-cycles in P}, we get that
\begin{align}\label{eq: 1/2-case, subtraction of product of 2-cycles in P}
    (1~3)(2~4)-(1~2)(3~4)\in P.
\end{align}
Taking the conjugation of the LHS of \eqref{eq: 1/2-case, sum of product of 2-cycles in P} by the transposition $(2~4)$, we get that
\begin{align}\label{eq: 1/2-case, ad(24) of sum product of 2-cycles}
(1~3)(2~4)+(1~2)(3~4)\in P.
\end{align}
Then by adding \eqref{eq: 1/2-case, subtraction of product of 2-cycles in P} to \eqref{eq: 1/2-case, ad(24) of sum product of 2-cycles}, we deduce that $(1~3)(2~4)\in P$. Thus by taking the conjugation of $(1~3)(2~4)$ by $(3~4)$, we also have $(1~4)(2~3)\in P$. In view of \eqref{eq: 1/2-case, product formula}, this implies that $a:=(1~4~3)+(2~4~3)\in P$.
Then $(2~5)a(2~5)=(1~4~3)+(5~4~3)\in P$. So $(2~4~3)+(5~4~3)\in P$ after taking conjugation by $(1~2)$.
Hence, $a-(2~5)a(2~5)=(2~4~3)-(5~4~3)\in P$.
Therefore, $(2~4~3)\in P$. Thus, $(1~2~3)\in P$ and $E((1~2~3))=(1~2~3)$, a contradiction.

Next, let us show that $(\mu,\theta)\neq (\frac{1}{2},\frac{-1}{2})$.

Assume that $(\mu,\theta)=(\frac{1}{2},\frac{-1}{2})$ instead, then $E((1~2~3))=\frac{1}{2}((1~2~3)-(1~3~2))\in P$. Hence, $E((1~2~4))=\frac{1}{2}((1~2~4)-(1~4~2))\in P$ and more generally, $E((i~j~k))=\frac{1}{2}((i~j~k)-(i~k~j))\in P$ for any three distinct natural numbers $i, j, k$. Then,
\begin{align}\label{eq: -1/2-case, product formula}
\begin{split}
P&\ni [(1~2~3)-(1~3~2)][(1~2~4)-(1~4~2)]\\
&=(1~3)(2~4)-(1~4~3)-(2~4~3)+(1~4)(2~3).
\end{split}
\end{align}
Taking the conjugation of the RHS of \eqref{eq: -1/2-case, product formula} by the transposition $(2~3)$, we deduce that
\begin{align}\label{eq: -1/2-case, ad(23) of the product formula}
    P\ni (1~2)(3~4)-(1~4~2)-(3~4~2)+(1~4)(2~3).
\end{align}
Notice that $-(2~4~3)+(3~4~2)=-(2~4~3)+(2~3~4)\in P$, we may subtract \eqref{eq: -1/2-case, ad(23) of the product formula} from \eqref{eq: -1/2-case, product formula} to get that
\begin{align}\label{eq: -1/2-case, mixed product of 2-cycles in P}
(1~3)(2~4)-(1~2)(3~4)-(1~4~3)+(1~4~2)\in P.
\end{align}
Taking the conjugation of the LHS of \eqref{eq: -1/2-case, mixed product of 2-cycles in P} by (1~3), we get that
\begin{align}\label{eq: -1/2-case, 2nd mixed product of 2-cycles}
    (1~3)(2~4)-(2~3)(1~4)-(1~3~4)+(3~4~2)\in P.
\end{align}
Subtracting \eqref{eq: -1/2-case, 2nd mixed product of 2-cycles} from \eqref{eq: -1/2-case, mixed product of 2-cycles in P} and then noticing that $-(1~4~3)+(1~3~4)\in P$, we deduce that
\begin{align}\label{eq: -1/2-case, mixed minus 2nd mixed}
-(1~2)(3~4)+(2~3)(1~4)+(1~4~2)-(3~4~2)\in P.
\end{align}
Adding \eqref{eq: -1/2-case, ad(23) of the product formula} to \eqref{eq: -1/2-case, mixed minus 2nd mixed}, we get that
\begin{align}
    (2~3)(1~4)-(3~4~2)\in P.
\end{align}
In view of \eqref{eq: -1/2-case, ad(23) of the product formula}, this implies that $(1~2)(3~4)-(1~4~2)\in P$. 

Then
\begin{align*}
    0&=\langle (1~4~2)-E((1~4~2)), (1~2)(3~4)-(1~4~2))\rangle_{\tau}\\
    &=\langle (1~4~2)-\frac{1}{2}((1~4~2)-(1~2~4)), (1~2)(3~4)-(1~4~2)\rangle_{\tau}=\frac{-1}{2}.
\end{align*}
This yields a contradiction.
\end{proof}

We are ready for the proof of Theorem \ref{thm: main thm}.

\begin{proof}[Proof of Theorem \ref{thm: main thm}]
We assume the above notations.
To start the proof, we observe the following simple fact.
\begin{align}\label{eq: control support of E(s)}
    \forall~ e\neq s\in G, ~\text{we have}~E(s)\in L(S_{\text{supp}(s)}).
\end{align}
Indeed, $\forall~ t\in S_{\mathbb{N}\setminus \text{supp}(s)}$, we have $tst^{-1}=s$. Hence, $E(s)=E(tst^{-1})=tE(s)t^{-1}$ implies that $E(s)\in L(S_{\mathbb{N}\setminus \text{supp}(s)})'\cap G= L(S_{\text{supp}(s)})$, where the last equality follows by applying Lemma \ref{lem: lemma to control relative commutant} to the infinite set $A=\mathbb{N}\setminus \text{supp}(s)$.

From \eqref{eq: control support of E(s)}, we may write $E((1~2))=c+\mu(1~2)$ for some $c,\mu\in\mathbb{C}$. 
Hence, 
\begin{align*}
c+\mu(1~2)=E((1~2))&=E(E((1~2)))\\
&=E(c+\mu(1~2))=c+\mu E((1~2))=c+\mu(c+\mu (1~2)).    
\end{align*}
Equivalently, $c=c+c\mu$ and $\mu=\mu^2$. Hence, we get that either $\mu=0$ or $(\mu,c)=(1,0)$.

Case 1: $(\mu,c)=(1,0)$.

In this case, we have $E((1~2))=(1~2)$. Thus, for any $i\neq j\in\mathbb{N}$, set $s=\left(\begin{smallmatrix}1&2\\i&j\end{smallmatrix}\right)$, then  $E((i~j))=E(s (1~2)s^{-1})=s E((1~2))s^{-1}=(i~j)$. Thus, all transpositions belong to $P$. Since $S_{\mathbb{N}}$ is generated by transpositions, we deduce that $P=L(S_{\mathbb{N}})$.

Case 2: $\mu=0$.

We may write $E((1~2))=c$. Hence $c=\tau(E((1~2)))=\tau((1~2))=0$, i.e., \[E((1~2))=0.\]

Next, we show that $E((1~2~3))=(1~2~3)$ or 0. We split the proof into several steps.

\textbf{Step 1}: we show that $E((1~2~3))=\mu(1~2~3)+\theta(1~3~2)$ for some $\mu,\theta\in\mathbb{C}$.

To see this, we observe the following facts.
\begin{itemize}
    \item the three transpositions $(1~2)$, $(1~3)$ and $(2~3)$ belong to the same orbit for the conjugation action of the subgroup $\langle (1~2~3)\rangle \curvearrowright S_{[3]}$. Indeed, $(1~2~3)(1~3)(1~3~2)=(1~2)$, $(1~2~3)(1~2)(1~3~2)=(2~3)$.
    \item $E((1~2~3))$ is invariant under the conjugation of $(1~2~3)$ and $(1~3~2)$.
    \item $\tau(E((1~2~3)))=0$.
\end{itemize}
Combining the above facts with \eqref{eq: control support of E(s)}, we may write \[E((1~2~3))=c((1~2)+(1~3)+(2~3))+\mu(1~2~3)+\theta(1~3~2)\] for some $c,\mu,\theta\in\mathbb{C}$.
Then, we observe that $E((1~2))=0$ implies $\langle (1~2), x\rangle_{\tau}=0$ for all $x\in P$. In particular, plugging in $x=E((1~2~3))$, we deduce that $c=0$.
Therefore, 
\[E((1~2~3))=\mu(1~2~3)+\theta(1~3~2).\]
\textbf{Step 2}: we show that the pair $(\mu,\theta)\in \{(0, 0), (1, 0), (\frac{1}{2},\frac{1}{2}), (\frac{1}{2},\frac{-1}{2})\}$.

First, we observe that $\mu,\theta\in\mathbb{R}$. Indeed,
on the one hand,
\[E((1~3~2))=E((2~3)(1~2~3)(2~3))=(2~3)E((1~2~3))(2~3)=\mu(1~3~2)+\theta(1~2~3);\] on the other hand, $E((1~3~2))=E((1~2~3)^{-1})=E((1~2~3))^*=\overline{\mu}(1~3~2)+\overline{\theta}(1~2~3)$. Hence, we get that $\mu,\theta\in\mathbb{R}$.

Then, we have
\begin{align*}
    E((1~2~3))=E(E((1~2~3)))&=E(\mu(1~2~3)+\theta(1~3~2))=\mu E((1~2~3))+\theta E((1~3~2))\\
    &=\mu(\mu(1~2~3)+\theta(1~3~2))+\theta(\mu(1~3~2)+\theta(1~2~3))\\
    &=(\mu^2+\theta^2)(1~2~3)+2\mu\theta(1~3~2).
\end{align*}
By comparing it with the expression for $E((1~2~3))$, we deduce that $\mu=\mu^2+\theta^2$ and $\theta=2\mu\theta$ hold true. Hence, we get that
either 
\begin{itemize}
    \item[(i)] $\theta=0$, $\mu\in\{0, 1\}$ \text{or},
    \item[(ii)] $\mu=\frac{1}{2}$, $\theta=\pm \frac{1}{2}$.
\end{itemize}
In other words, the pair $(\mu,\theta)\in \{(0,0), (1, 0), (\frac{1}{2},\frac{1}{2}), (\frac{1}{2},\frac{-1}{2})\}$. 

Therefore, we deduce that $(\mu,\theta)\in\{(0, 0), (1,0)\}$ from Proposition \ref{prop: rule out symmetry}.

\textbf{Step 3}: we show that if $(\mu,\theta)=(0, 0)$, then $P=\mathbb{C}$; if $(\mu,\theta)=(1, 0)$, then $P=L(A_{\mathbb{N}})$.

Case 1: $(\mu,\theta)=(1, 0)$.

Then $E((1~2~3))=(1~2~3)\in P$. For any three distinct natural numbers $i, j, k$, set $s=\left(\begin{smallmatrix}1&2&3\\i&j&k\end{smallmatrix}\right)$, then we have $E((i~j~k))=E(s(1~2~3)s^{-1})=sE((1~2~3))s^{-1}=s(1~2~3)s^{-1}=(i~j~k)\in P$. Recall that $[n]:=\{1,\ldots, n\}$.
Since $A_{\mathbb{N}}=\cup_{n\geq 5}A_{[n]}$, it equals the normal subgroup of $S_{\mathbb{N}}$  generated by all 3-cycles \cite[Lemma 3.9, Chapter 3]{rotman_gtm148}, we deduce that $L(A_{\mathbb{N}})\subseteq P\subseteq L(S_{\mathbb{N}})$. Then notice that $E((1~2))=0$ implies  $E((1~2)s)=E((1~2))s=0$ for all $s\in A_{\mathbb{N}}$, hence $P=L(A_{\mathbb{N}})$ since $S_{\mathbb{N}}=A_{\mathbb{N}}\sqcup (1~2)A_{\mathbb{N}}$.

Case 2: $(\mu,\theta)=(0, 0)$.

In this case, we have $E((1~2~3))=0$. Hence $\phi((1~2~3))=0$ for the trace $\phi$ defined by $\phi(s)=\tau(E(s)s^{-1})$. Then $\phi(s)=0$ for all $e\neq s\in S_{\mathbb{N}}$ by Corollary \ref{cor: 3-cycle vanishing observation}. Hence $||E(s)||_2^2=\tau(E(s)E(s)^*)=\tau(E(s)E(s^{-1}))=\tau(E(s)s^{-1})=\phi(s)=0$. Thus, $E(s)=0$ for all $e\neq s\in S_{\mathbb{N}}$. Therefore, $P=\mathbb{C}$.
\end{proof}

Finally, we present the proof of Corollary \ref{cor: main corollary}.

\begin{proof}[Proof of Corollary \ref{cor: main corollary}]
Recall that we have proved in \cite[Example 3.5]{aj} that the group $H:=\mathbb{Z}\wr \mathbb{Z}$ does not have the ISR property. Observe that both $H$ and $G=S_{\mathbb{N}}$ are i.c.c. amenable groups, hence $L(G)\cong L(H)\cong \mathcal{R}$, the hyperfinite II$_1$ factor by Connes's theorem \cite{connes}. The proof is done since $S_{\mathbb{N}}$ has the ISR property by Theorem \ref{thm: main thm}.
\end{proof}

\subsection*{Acknowledgements}
Y. J. is partially supported by National Natural Science Foundation of China (Grant No. 12001081) and the Fundamental Research Funds for the Central Universities (Grant No.DUT19RC(3)075).
X. Z. is partially supported by National Natural Science Foundation of China (Grant No. 12001085) and the Scientific Research Fund of Liaoning Provincial Education Department (Grant No. LJKZ1047). We are grateful to Prof. Adam Skalski and Dr. Amrutam Tattwamasi for helpful comments. We also thank the anonymous referee for excellent comments which help improving the readability of the paper greatly.

\begin{bibdiv}
\begin{biblist}

\bib{ab}{article}{
   author={Alekseev, V.},
   author={Brugger, R.},
   title={A rigidity result for normalized subfactors},
   journal={J. Operator Theory},
   volume={86},
   date={2021},
   number={1},
   pages={3--15},}

\bib{aho}{article}{
  author={Amrutam, T.},
  author={Hartman, Y.},
  author={Oppelmayer, H.},
  title={On the amenable subalgebras of group von Neumann algebras},
  year={2023},
  status={arXiv preprint, arXiv: 2309.10494},
}

\bib{aj}{article}{
   author={Amrutam, T.},
   author={Jiang, Y.},
   title={On invariant von Neumann subalgebras rigidity property},
   journal={J. Funct. Anal.},
   volume={284},
   date={2023},
   number={5},
   pages={Paper No. 109804},}

\bib{bd_book}{book}{
   author={Bekka, B.},
   author={de la Harpe, P.},
   title={Unitary representations of groups, duals, and characters},
   series={Mathematical Surveys and Monographs},
   volume={250},
   publisher={American Mathematical Society, Providence, RI},
   date={[2020] \copyright 2020},
   pages={xi+474},}

\bib{bo}{book}{
   author={Borodin, A.},
   author={Olshanski, G.},
   title={Representations of the infinite symmetric group},
   series={Cambridge Studies in Advanced Mathematics},
   volume={160},
   publisher={Cambridge University Press, Cambridge},
   date={2017},
   pages={vii+160},}

\bib{bo_book}{book}{
   author={Brown, N. P.},
   author={Ozawa, N.},
   title={$C^*$-algebras and finite-dimensional approximations},
   series={Graduate Studies in Mathematics},
   volume={88},
   publisher={American Mathematical Society, Providence, RI},
   date={2008},
   pages={xvi+509},}

\bib{cd}{article}{
   author={Chifan, I.},
   author={Das, S.},
   title={Rigidity results for von Neumann algebras arising from mixing
   extensions of profinite actions of groups on probability spaces},
   journal={Math. Ann.},
   volume={378},
   date={2020},
   number={3-4},
   pages={907--950},}

\bib{cds}{article}{
   author={Chifan, I.},
   author={Das, S.},
   author={Sun, B.},
   title={Invariant subalgebras of von Neumann algebras arising from
   negatively curved groups},
   journal={J. Funct. Anal.},
   volume={285},
   date={2023},
   number={9},
   pages={Paper No. 110098},}

\bib{ckp15}{article}{
   author={Chifan, I.},
   author={Kida, Y.},
   author={Pant, S.},
   title={Primeness results for von Neumann algebras associated with surface
   braid groups},
   journal={Int. Math. Res. Not. IMRN},
   date={2016},
   number={16},
   pages={4807--4848},}

\bib{cs11}{article}{
   author={Chifan, I.},
   author={Sinclair, T.},
   title={On the structural theory of ${\rm II}_1$ factors of negatively
   curved groups},
   journal={Ann. Sci. \'{E}c. Norm. Sup\'{e}r. (4)},
   volume={46},
   date={2013},
   number={1},
   pages={1--33},}

\bib{csu11}{article}{
   author={Chifan, I.},
   author={Sinclair, T.},
   author={Udrea, B.},
   title={On the structural theory of $II_1$ factors of negatively
   curved groups, II: Actions by product groups},
   journal={Adv. Math.},
   volume={245},
   date={2013},
   pages={208--236},}

\bib{csu13}{article}{
   author={Chifan, I.},
   author={Sinclair, T.},
   author={Udrea, B.},
   title={Inner amenability for groups and central sequences in factors},
   journal={Ergodic Theory Dynam. Systems},
   volume={36},
   date={2016},
   number={4},
   pages={1106--1129},}

\bib{choda}{article}{
   author={Choda, M.},
   title={Group factors of the Haagerup type},
   journal={Proc. Japan Acad. Ser. A Math. Sci.},
   volume={59},
   date={1983},
   number={5},
   pages={174--177},}

\bib{connes}{article}{
   author={Connes, A.},
   title={Classification of injective factors. Cases $II_{1},$
   $II_{\infty },$ $III_{\lambda },$ $\lambda \not=1$},
   journal={Ann. of Math. (2)},
   volume={104},
   date={1976},
   number={1},
   pages={73--115},}

\bib{cp}{article}{
author={Creutz, D.},
author={Peterson, J.},
title={Character rigidity for lattices and commensurators},
status={Amer. J. Math., to appear, preprint available at arXiv: 1311.4513},
}

   \bib{dm_jfa}{article}{
   author={Dudko, A.},
   author={Medynets, K.},
   title={On characters of inductive limits of symmetric groups},
   journal={J. Funct. Anal.},
   volume={264},
   date={2013},
   number={7},
   pages={1565--1598},}

\bib{dm_ggd}{article}{
   author={Dudko, A.},
   author={Medynets, K.},
   title={Finite factor representations of Higman-Thompson groups},
   journal={Groups Geom. Dyn.},
   volume={8},
   date={2014},
   number={2},
   pages={375--389},}

\bib{es}{article}{
   author={Eckhardt, C.},
   author={Shulman, T.},
   title={On amenable Hilbert-Schmidt stable groups},
   journal={J. Funct. Anal.},
   volume={285},
   date={2023},
   number={3},
   pages={Paper No. 109954},}

\bib{gk}{article}{
author={Gohm, R.},
author={K\"{o}stler, C.},
title={Noncommutative independence from characters of the infinite symmetric group $S_{\infty}$},
year={2010},
status={arXiv preprint, arXiv: 1005.5726v1},
}

\bib{kp}{article}{
   author={Kalantar, M.},
   author={Panagopoulos, N.},
   title={On invariant subalgebras of group $C$* and von Neumann algebras},
   journal={Ergodic Theory Dynam. Systems},
   volume={43},
   date={2023},
   number={10},
   pages={3341--3353},}

\bib{lv}{article}{
author={Levit, A.},
author={Vigdorovich, I.},
title={Characters of solvable groups, Hilbert-Schmidt stability and dense periodic measures},
year={2022},
status={Math. Ann. to appear, preprint available at arXiv: 2206.02268},
}

\bib{ono}{article}{
   author={Onofri, L.},
   title={Teoria delle sostituzioni che operano su una infinit\`a numerabile
   di elementi},
   journal={Ann. Mat. Pura Appl.},
   volume={7},
   date={1929},
   number={1},
   pages={103--130},}

\bib{pet}{article}{
author={Peterson, J.},
title={Character rigidity for lattices in higher-rank groups},
status={preprint, available at \url{www.math.vanderbilt.edu/peters10/rigidity.pdf} },
year={2014},
}

\bib{phe}{book}{
   author={Phelps, Robert R.},
   title={Lectures on Choquet's theorem},
   series={Lecture Notes in Mathematics},
   volume={1757},
   edition={2},
   publisher={Springer-Verlag, Berlin},
   date={2001},
   pages={viii+124},}

\bib{rotman_gtm148}{book}{
   author={Rotman, J. J.},
   title={An introduction to the theory of groups},
   series={Graduate Texts in Mathematics},
   volume={148},
   edition={4},
   publisher={Springer-Verlag, New York},
   date={1995},
   pages={xvi+513},}
   
   \bib{su}{article}{
   author={Schreier, J.},
   author={Ulam, S.},
   title={\"{U}ber die Permutationsgruppe der nat\"{u}rlichen Zahlenfolge},
   journal={Studia Math.},
   date={1933},
   volume={4},
   number={1},
   pages={134--141},
   }

\bib{tho-simplex}{article}{
   author={Thoma, E.},
   title={\"{U}ber positiv-definite Klassenfunctionen abz\"{a}hlbarer Gruppen},
   journal={Math. Z.},
   volume={84},
   date={1964},
   pages={389--402},}
\bib{tho-classification}{article}{
   author={Thoma, E.},
   title={Die unzerlegbaren, positiv-definiten Klassenfunktionen der
   abz\"{a}hlbar unendlichen, symmetrischen Gruppe},
   journal={Math. Z.},
   volume={85},
   date={1964},
   pages={40--61},}

\end{biblist}
\end{bibdiv}

\end{document}